  \theoremstyle{plain}
  \newtheorem{thm}{Theorem}[section]
  \newtheorem{lemma}[thm]{Lemma}
  \newtheorem{cor}[thm]{Corollary}
  \theoremstyle{definition}
  \newtheorem{defn}[thm]{Definition}
  \newtheorem{example}[thm]{Example}
  \newtheorem{rem}[thm]{Remark}
  \numberwithin{equation}{section}
  \newcommand{\interior}[1]{{\kern0pt#1}^{\mathrm{o}}}
\begin{document}
  	

  	\title{Induced Homeomorphism and Atsuji Hyperspaces}
  	
  	\author[A. Gupta]{A. K. Gupta$^{\dagger *}$}
  	
  	\address{$^\dagger$Department of Mathematics\\ National Institute of Technology Meghalaya\\ Shillong 793003\\ Meghalaya\\ India}
  	\email{ajitkumar.gupta@nitm.ac.in}
  	\author[S. Mukherjee]{S. Mukherjee{$^{\dagger}$}}
  	\address{$^\dagger$Department of Mathematics\\ National Institute of Technology Meghalaya\\ Shillong 793003\\Meghalaya\\ India}
  	\email{saikat.mukherjee@nitm.ac.in}

  	$\thanks{*Corresponding author}$
  	\subjclass[2020]{54B20}
  	
  	\keywords{Metric space, Hausdorff distance, Homeomorphism, Atsuji space, Multivalued map.}

  	\begin{abstract} 
  	   Given uniformly homeomorphic metric spaces $X$ and $Y$, it is proved that the hyperspaces $C(X)$ and $C(Y)$ are uniformly homeomorphic, where $C(X)$ denotes the collection of all nonempty closed subsets of $X$, and is endowed with Hausdorff distance. Gerald Beer has proved that the hyperspace $C(X)$ is Atsuji when $X$ is either compact or uniformly discrete. An Atsuji space is a generalization of compact metric spaces as well as of uniformly discrete spaces. In this article, we investigate the space $C(X)$ when $X$ is Atsuji, and a class of Atsuji subspaces of $C(X)$ is obtained. Using the obtained results, some fixed point results for continuous maps on Atsuji spaces are obtained.
  	     	     	    
  	\end{abstract}

  		\maketitle

  	\section{Introduction}


Let $(W,\tau)$ be a topological space in which every singleton set $\{x\}$ is closed. A \textit{hyperspace} of the space $W$ is the collection $C(W)$ of nonempty closed subsets of $W$, endowed with a topology $\tau'$ such that the mapping $I:(W,\tau) \to (C(W),\tau')$, defined as $I(x)=\{x\}$, is a homeomorphism onto its range. This suitable topology $\tau'$ is called \textit{hypertopology} or \textit{hyperspace topology} \cite{rl94}. 
If $W$ is a metric space, then the induced Hausdorff distance $H$, an extended real-valued metric on $C(W)$, gives a hypertopology.

	A metric space $X$ is said to be \textit{Atsuji space} if each continuous map from $X$ to a metric space $Y$ is uniformly continuous \cite{gb86}.
	 The property of being Atsuji lies in between the completeness and the compactness. For last few decades, the theory of these spaces has attracted the attention of several researchers. These spaces demonstrate not only several interesting internal characterizations (see \cite{ma16, gb86, tj07}) but also exhibit several interesting external characterizations in the theory of hyperspaces (see \cite{gb87, gb92, tj08}). Atsuji spaces are also known as normal metric spaces (see \cite{sg65}) and as Lebesgue metric spaces (see \cite{sb81, sr93}).

	  In \cite{gb85}, Gerald Beer has shown that a metric space $X$ is either compact or uniformly discrete if and only if its Hausdorff hyperspace $C(X)$ is Atsuji. A compact metric space and a uniformly discrete space both are Atsuji spaces. In this manuscript, we take $X$ to be Atsuji and investigate the space $C(X)$. It is found that the space $C(X)$ fails to be Atsuji, however $C(X)$ contains a class of Atsuji subspaces. These Atsuji subspaces include the completions of those point-finite collections in $C(X)$ which contain all the singletons $\{x\}, x\in X$.
  	  	
 This article is organized as follows. Section \ref{Pre} contains some preliminaries required for the discussion in later sections.  In Section \ref{Ind_Map}, it is shown that if $X$ and $Y$ are uniformly homeomorphic, then their corresponding Hausdorff hyperspaces $C(X)$ and $ C(Y)$ are so. Section \ref{AtsjHprspcs} presents the sufficient conditions for the subspaces of $C(X)$ to be Atsuji.  In this regard, an open problem for the existence of maximal Atsuji subspace of $C(X)$ is also placed. Applying the obtained results, in Section \ref{FxdPntsRslts}, we acquire fixed point results for continuous maps on Atsuji spaces.

  	\section{Preliminaries}\label{Pre}
Given a subset $A$ in a metric space $X$, we denote the set of all limit points of $A$ by $A'$, and the complement of $A$ in $X$ by $A^c~ (\text{or }X\setminus A)$. An open ball in $X$, centered at $x\in X$ with radius $\epsilon>0$ is denoted by $B(x,\epsilon)$. The set $\bigcup\limits_{x\in A}B(x,\epsilon)$ is called the $\epsilon$-neighborhood of $A$ and is denoted by $N_{\epsilon}(A)$. 
  	
  	\begin{defn} The {\it Hausdorff distance}, $H$, of two nonempty subsets $A,B $ of a metric space $(X,d)$ is defined as $H(A,B)=\max\{\sup\limits_{x\in A}d(x,B),$ $\sup\limits_{x\in B}d(x,A)\}$, where $d(x,A)=\inf\limits_{y\in A}d(x,y)$.
  	\end{defn}

\begin{defn}
	A metric space $X$ is said to be an \textit{Atsuji space} if the set of limit points $X'$, and for each $\epsilon>0$, the set $[N_\epsilon (X')]^c$ is uniformly discrete.
\end{defn}


\begin{thm}\label{AtsujiI(x)}\cite{tj08}
	Let $(X,d)$ be a metric space. The completion $(\hat{X},d)$ is an Atsuji space if and only if every sequence $\{x_n\}$ in $X$ with $\lim\limits_{n\to\infty} I (x_n)=0$ has a Cauchy subsequence, where $I(x)=d(x,X\setminus \{x\}), x\in X$.
\end{thm}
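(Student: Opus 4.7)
The plan is to prove the theorem via the Beer-type characterization encoded in the paper's definition: $\hat{X}$ is Atsuji exactly when $\hat{X}'$ is compact and $\hat{X}\setminus N_\epsilon(\hat{X}')$ is uniformly discrete for every $\epsilon>0$. I will check both implications against this criterion, making essential use of the density of $X$ in $\hat{X}$.

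For the forward direction, suppose $\hat{X}$ is Atsuji and take $\{x_n\}\subseteq X$ with $I(x_n)\to 0$. The key step is to show that for every $\epsilon>0$ we eventually have $x_n\in N_\epsilon(\hat{X}')$. Indeed, if $x_n\notin N_\epsilon(\hat{X}')$, any point of $B(x_n,\epsilon/2)\cap X$ different from $x_n$ lies in $\hat{X}\setminus N_{\epsilon/2}(\hat{X}')$, and the uniform-discreteness constant of this set then gives a positive lower bound on $I(x_n)$, contradicting $I(x_n)\to 0$. Passing to a subsequence, pick $z_n\in\hat{X}'$ with $d(x_n,z_n)\to 0$; compactness of $\hat{X}'$ yields a subsequence $z_{n_k}\to z$, so $x_{n_k}\to z$ in $\hat{X}$ and is Cauchy.

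For the converse, assume every such sequence has a Cauchy subsequence, and verify each half of the Atsuji criterion. To show $\hat{X}'$ is (sequentially) compact, take $\{z_n\}\subseteq\hat{X}'$. Since a limit point in a metric space has an infinite neighborhood and $X$ is dense in $\hat{X}$, the set $B(z_n,1/n)\cap X$ is infinite, so one can pick distinct $x_n,y_n$ inside it. Then $I(x_n)\le d(x_n,y_n)<2/n\to 0$, the hypothesis delivers a Cauchy subsequence $x_{n_k}\to z\in\hat{X}$, and because $d(x_{n_k},z_{n_k})<1/n_k$, also $z_{n_k}\to z$. Since the set of limit points is closed in a metric space, $z\in\hat{X}'$. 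For uniform discreteness of $\hat{X}\setminus N_\epsilon(\hat{X}')$, argue by contradiction: if distinct $a_n,b_n\in\hat{X}\setminus N_\epsilon(\hat{X}')$ satisfied $d(a_n,b_n)\to 0$, approximate by $x_n,y_n\in X$ within $\delta_n:=\min\{1/n,\,d(a_n,b_n)/4\}$, which forces $x_n\ne y_n$ and $d(x_n,y_n)\to 0$. A Cauchy subsequence $x_{n_k}\to z\in\hat{X}$ then drags $a_{n_k},b_{n_k}$ to $z$; since $a_{n_k}\ne b_{n_k}$, for each $k$ at most one equals $z$, so $z\in\hat{X}'$, which eventually puts $a_{n_k}$ inside $N_\epsilon(\hat{X}')$, a contradiction.

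The main obstacle will be the converse direction, specifically setting up the auxiliary sequences in $X$ so that (i) they genuinely witness $I(x_n)\to 0$ through distinct companions living in $X$ rather than merely in $\hat{X}$, and (ii) the limit produced by the hypothesis can be identified as a member of $\hat{X}'$. Both tasks hinge on a careful quantitative use of the density of $X$ in $\hat{X}$ and on the observation that the limit-point set of any metric space is closed; the choice $\delta_n<d(a_n,b_n)/4$ in the uniform-discreteness step is the delicate quantitative point that guarantees distinctness of the approximants.
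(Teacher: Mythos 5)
Your argument is correct. Note, however, that the paper itself gives no proof of this statement: it is quoted verbatim from Jain and Kundu \cite{tj08} as a preliminary, so there is no internal proof to compare against. Your proof is a legitimate self-contained derivation from the limit-point characterization of Atsuji spaces that the paper adopts as its definition (namely that $\hat{X}'$ is compact and each $\hat{X}\setminus N_\epsilon(\hat{X}')$ is uniformly discrete, the word ``compact'' being evidently omitted by accident in the paper's Definition 2.2). All the delicate points check out: in the forward direction the bound $I(x_n)\ge\min\{\eta,\epsilon/2\}$ for $x_n\notin N_\epsilon(\hat{X}')$ is valid because every $y\in B(x_n,\epsilon/2)\cap X$ with $y\ne x_n$ lands in the uniformly discrete set $\hat{X}\setminus N_{\epsilon/2}(\hat{X}')$ together with $x_n$; in the converse, the infinitude of $B(z_n,1/n)\cap X$ follows from density plus the $T_1$ property, the derived set of a metric space is indeed closed (so the Cauchy limits land in $\hat{X}'$), and the choice $\delta_n<d(a_n,b_n)/4$ correctly forces $x_n\ne y_n$ while keeping $d(x_n,y_n)\to 0$. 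The one cosmetic remark is that in the forward direction no passage to a subsequence is needed to choose the $z_n$: the eventual containment in $N_\epsilon(\hat{X}')$ for every $\epsilon$ already gives $d(x_n,\hat{X}')\to 0$ along the full sequence.
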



A topological vector space $X$ is said to be \textit{locally convex} if there is a local base $\mathcal B$ at $\textbf{0}$ (the zero vector) whose members are convex. 

\begin{thm} [Tychonoff's Fixed Point Theorem] \cite{sa75}
    Let $A$ be a compact convex subset of a locally convex topological vector space. If $f:A\to A$ is a continuous map, then $f$ has a fixed point.
\end{thm}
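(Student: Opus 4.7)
The plan is to reduce Tychonoff's theorem to the finite-dimensional Brouwer fixed point theorem by constructing, for each convex neighborhood of the origin, a continuous finite-dimensional approximation of $f$ via a Schauder-type projection and then passing to a limit using compactness of $A$.

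First I would fix an arbitrary convex balanced open neighborhood $V$ of $\mathbf{0}$ and use compactness of $A$ to extract a finite $V$-net $a_1,\ldots, a_n \in A$, so that $A \subseteq \bigcup_{i=1}^n (a_i + V)$. Setting $K_V = \mathrm{co}\{a_1,\ldots, a_n\} \subseteq A$, I would use the Minkowski functional of $V$ to build a continuous partition of unity $\{\varphi_i\}_{i=1}^n$ subordinate to the cover $\{(a_i+V)\cap A\}_{i=1}^n$ and define the Schauder projection $p_V \colon A \to K_V$ by $p_V(x) = \sum_{i=1}^n \varphi_i(x)\, a_i$. The crucial estimate is $p_V(x) - x \in V$ for every $x \in A$, which uses both the convexity and balancedness of $V$: each index $i$ with $\varphi_i(x) > 0$ satisfies $a_i - x \in V$, and $p_V(x) - x$ is then a convex combination of such differences. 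Brouwer's theorem applied to the continuous self-map $p_V \circ f$ of the finite-dimensional compact convex set $K_V$ yields an $x_V \in K_V$ with $p_V(f(x_V)) = x_V$, whence $f(x_V) - x_V \in V$.

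Indexing the convex balanced neighborhoods of $\mathbf{0}$ by reverse inclusion turns $\{x_V\}$ into a net in the compact set $A$; extracting a convergent subnet $x_{V_\alpha} \to x_0$, the continuity of $f$ combined with the approximate fixed point property $f(x_{V_\alpha}) - x_{V_\alpha} \in V_\alpha$ and a standard neighborhood chase in the Hausdorff locally convex topology forces $f(x_0) = x_0$. The main obstacle is the construction of the Schauder projection $p_V$: one needs the partition-of-unity functions to be genuinely continuous on $A$ (which relies on continuity of the Minkowski functional of an open convex $V$) and one needs $p_V$ to remain within $V$ of the identity, a property hinging essentially on the convexity of $V$ and hence on local convexity of the ambient space. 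Everything beyond that reduces to routine applications of Brouwer's theorem and net compactness.
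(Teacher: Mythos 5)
The paper does not prove this statement: it is quoted as a known theorem with the attribution \cite{sa75}, so there is no internal proof to compare against. Your outline is the classical Schauder-projection reduction to Brouwer's theorem, and it is correct as far as it goes: the finite $V$-net, the partition of unity built from Minkowski functionals of the open convex $V$, the estimate $p_V(x)-x\in V$ via convexity, Brouwer's theorem on the finite-dimensional compact convex set $\mathrm{co}\{a_1,\dots,a_n\}$, and the concluding subnet/neighborhood chase constitute the standard argument. Two small points are worth making explicit if you write this up in full. First, the step from $p_V(f(x_V))=x_V$ to $f(x_V)-x_V\in V$ uses $-V=V$, i.e.\ the balancedness (or at least symmetry) you imposed on $V$; since the convex balanced neighborhoods form a base at $\mathbf{0}$ in any locally convex space, this is harmless but should be said. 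Second, the final deduction of $f(x_0)=x_0$ from $f(x_0)-x_0\in W$ for every neighborhood $W$ of $\mathbf{0}$ requires the topology to be Hausdorff; this hypothesis is implicit in the usual formulation of the theorem but is not written in the statement above, so you should either add it or note where it is used.
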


 \section{Induced Map on Hyperspaces}\label{Ind_Map}
  	
  	Consider a map $f:X\to Y$. Let $P(X)$ denote the collection of all nonempty subsets of $X$. Then, the \textit{induced map} $F : P(X)\to P(Y)$ is defined as $A\mapsto\{f(x):x\in A\}$,  $A\in P(X)$.  In this article, for convenience, the induced map, generated by a given map $f$, will be denoted by the corresponding capital letter $F$.
  	

The induced map plays some roles in fixed point theory. In this article we use this map to find a fixed point for its inducing map. The induced map has been used by Nadler in \cite{sb87}, to discuss the fixed point property of some hyperspaces of certain continua. 
  	 
 For metric spaces $(X,d)$, $(Y,d')$, we denote their corresponding Hausdorff hyperspaces of nonempty closed subsets by  $(C(X),H)$, $(C(Y),$ $H')$, respectively; where $H$ and $H'$ are Hausdorff distances induced by the metrics $d$ and $d'$, respectively. 


 A \textit{uniform homeomorphism} $f$ from a metric space $X$ to a metric space $Y$ is a bijective map such that $f$ and $f^{-1}$ are uniformly continuous.
   
The following result provides a sufficient condition for the hyperspace $C(X)$ of a metric space $X$ to be uniformly homeomorphic to a hyperspace $C(Y)$, as well as is used to derive a fixed point result in Section \ref{FxdPntsRslts}.

\begin{thm}\label{fHomFhom}
	Let $X,Y$ be metric spaces. Then, $f$ is a uniform homeomorphism from $X$ to $Y$ if and only if the induced map $F$, defined on the hyperspace $C(X)$, is a uniform homeomorphism from $C(X)$ to the hyperspace $C(Y)$.
\end{thm}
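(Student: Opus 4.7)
The plan is to prove the two implications separately. The forward direction is essentially a computation with the Hausdorff distance, while the backward direction exploits the canonical isometric embedding $x \mapsto \{x\}$ of $X$ into $C(X)$.

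For the forward direction, assume $f$ is a uniform homeomorphism. First observe that since $f^{-1}$ is continuous, $f$ is a closed map, so $F(A) = f(A)\in C(Y)$ whenever $A\in C(X)$; by symmetry, the induced map $G$ of $f^{-1}$ maps $C(Y)$ into $C(X)$, and a routine check gives $G\circ F = \mathrm{id}_{C(X)}$ and $F\circ G = \mathrm{id}_{C(Y)}$, so $F$ is a bijection with $F^{-1}=G$. For uniform continuity, fix $\epsilon>0$ and use the uniform continuity of $f$ to choose $\delta>0$ with $d(x,x')<\delta \Rightarrow d'(f(x),f(x'))<\epsilon/2$. If $H(A,B)<\delta$, then for each $a\in A$ the inequality $d(a,B)<\delta$ produces some $b\in B$ with $d(a,b)<\delta$, whence $d'(f(a),F(B))\le d'(f(a),f(b))<\epsilon/2$. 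Taking the supremum over $a\in A$, interchanging the roles of $A$ and $B$, and using the same estimate, one obtains $H'(F(A),F(B))\le \epsilon/2 <\epsilon$. The identical argument applied to $f^{-1}$ and $G$ then yields uniform continuity of $F^{-1}$.

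For the backward direction, assume $F:C(X)\to C(Y)$ is a uniform homeomorphism. The key observation is that the singleton embedding $x\mapsto\{x\}$ is an isometry of $X$ onto a subspace of $C(X)$, since $H(\{x\},\{x'\})=d(x,x')$, and analogously for $Y$. Because $F(\{x\})=\{f(x)\}$, the map $f$ is exactly $F$ restricted to these isometric copies; hence $f$ inherits uniform continuity from $F$. Bijectivity of $f$ follows from that of $F$: if $f(x)=f(x')$, then $F(\{x\})=F(\{x'\})$ forces $x=x'$, and for any $y\in Y$, surjectivity of $F$ yields $A\in C(X)$ with $F(A)=\{y\}$, so any $x\in A$ satisfies $f(x)=y$. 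Once $f$ is known to be a bijection, one checks that $F^{-1}(\{y\})=\{f^{-1}(y)\}$, and applying the same restriction-to-singletons argument to the uniformly continuous $F^{-1}$ gives uniform continuity of $f^{-1}$.

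The main obstacle I anticipate is the sup-passing step in the forward direction: one must convert the infimum-based inequality $d(a,B)<\delta$ into an explicit $b\in B$ within $\delta$ of $a$ (without assuming $d(a,B)$ is attained), and then promote pointwise estimates to a uniform bound on $H'$ without losing strict inequality, which is what forces the $\epsilon/2$ buffer. Everything else reduces to the formal fact that the induced map operation is functorial in $f$ and compatible with the isometric singleton embedding.
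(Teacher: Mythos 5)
Your proof is correct and follows essentially the same route as the paper: the forward direction is the same Hausdorff-distance estimate (the paper settles for $H'\le\epsilon$ where you use an $\epsilon/2$ buffer, and it obtains surjectivity of $F$ via preimages rather than your two-sided inverse $G$), and the backward direction is the singleton-embedding argument that the paper's one-line converse leaves implicit. Your version is slightly more complete in that it explicitly verifies bijectivity of $f$ and the identity $F^{-1}(\{y\})=\{f^{-1}(y)\}$, which the paper omits.
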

	\begin{proof}

		By continuity of $f^{-1}$, we have $F(A)$ is closed in $Y$ for each $A\in C(X)$.
		
		We show that $F$ is uniformly continuous. By uniform continuity of $f$, for each $\epsilon>0$ there is a $\delta>0$ such that $d'(f(x),f(y))<\epsilon$ whenever $d(x,y)<\delta$ for all $x,y\in X$. Consider $H(A,B)<\delta$ for some $A,B\in C(X)$. This implies, $B(a,\delta)\cap B\neq \emptyset$ for all $a\in A$. And therefore, $d'(f(a),f(y))<\epsilon$, $\forall y\in B(a,\delta)\cap B, ~\forall a\in A $; which implies $\inf\limits_{y\in B(a,\delta)\cap B} d'(f(a),f(y))<\epsilon, ~\forall a\in A$. And hence, $\sup\limits_{a\in A}\inf\limits_{y\in B} d'(f(a),f(y))\leq\epsilon$. Similarly, we can prove, $\sup\limits_{b\in B}\inf\limits_{y\in A} d'(f(b),f(y))\leq\epsilon$. Thus we proved, for each $\epsilon>0$ there is a $\delta>0$ such that $H'(F(A),F(B))\leq\epsilon$ whenever $H(A,B)<\delta$, $\forall A,B\in C(X)$.
		
		By continuity and surjectivity of $f$, for each $ P \in C(Y)$ there is a set $S:=\{x\in X: f(x)\in P\}$ which is nonempty and closed in $X$.
		 Thus, $F^{-1}$ exists. Using the uniform continuity of $f^{-1}$, we can show, as above, that $F^{-1}$ is uniformly continuous.
		
	 Conversely, from the uniform continuities of $F$ and $F^{-1}$, it follows that $f$ and $f^{-1}$ are uniformly continuous.
	\end{proof}

We note that, if $f$ is a homeomorphism, then the induced map $F$ need not be a homeomorphism. For instance,
\begin{example}
	Consider $X=(-1,1)$ and $Y=\mathbb R$, both endowed with the usual metric of $\mathbb R$. The map $f:(-1,1) \to \mathbb R$, defined by $f(x)=x/(1-|x|)$, is a homeomorphism, but the induced map $F:C(X)\to C(Y)$ is not. Indeed, the sequence of closed intervals $\{[-n/(n+1),n/(n+1)]\}$ is convergent to $(-1,1)$ in $C(X)$, while the sequence $\{F([-n/(n+1),n/(n+1)])\}$ is not convergent to $F((-1,1))$.
\end{example} 

The above theorem also deduces a necessary and sufficient condition for two Hausdorff metrics to be uniformly equivalent.
\begin{cor}\label{d,d'-H,H'-unif. equi.}
	Let $(X,d)$ be a metric space, and $d'$ be another compatible metric. Then, $d$ and $d'$ are uniformly equivalent if and only if the Hausdorff distances $H$ and $H'$ are uniformly equivalent on $C(X)$.
\end{cor}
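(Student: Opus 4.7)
The plan is to deduce this corollary directly from Theorem \ref{fHomFhom} by taking the map $f$ to be the identity on $X$. Since $d$ and $d'$ are assumed to be compatible, they induce the same topology on $X$, so the identity map $\text{id}:(X,d)\to(X,d')$ is a homeomorphism; in particular it is bijective, and by definition the two metrics are uniformly equivalent precisely when $\text{id}$ and its inverse are uniformly continuous, i.e.\ when $\text{id}$ is a uniform homeomorphism in the sense defined just before Theorem \ref{fHomFhom}.

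Next I would observe that the induced map $\text{ID}:P(X)\to P(X)$ associated to the identity $f=\text{id}$ is nothing but the set-theoretic identity $A\mapsto A$. Hence $\text{ID}$ restricts to the identity map from $(C(X),H)$ to $(C(X),H')$, and the statement that $H$ and $H'$ are uniformly equivalent on $C(X)$ is exactly the statement that this identity map is a uniform homeomorphism between $(C(X),H)$ and $(C(X),H')$.

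With these two identifications in hand, the corollary is immediate from Theorem \ref{fHomFhom}: one direction uses that if $d$ and $d'$ are uniformly equivalent then $f=\text{id}$ is a uniform homeomorphism, so by the theorem $F=\text{ID}$ is a uniform homeomorphism between the two Hausdorff hyperspaces, which is the uniform equivalence of $H$ and $H'$; the converse runs the same argument in reverse, using the ``only if'' direction of Theorem \ref{fHomFhom}.

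There is essentially no obstacle here beyond making sure the setup of Theorem \ref{fHomFhom} is legitimately applicable when $X=Y$ as a set but is equipped with two different (though topologically equivalent) metrics; this is fine because Theorem \ref{fHomFhom} is stated for arbitrary metric spaces $X,Y$, and its proof only uses continuity of $f$ and $f^{-1}$ to verify that images and preimages lie in $C(Y)$ and $C(X)$ respectively, which here is automatic since $d$ and $d'$ are compatible and so have the same closed sets.
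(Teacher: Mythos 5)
Your proposal is correct and is exactly the deduction the paper intends: the corollary is stated as an immediate consequence of Theorem \ref{fHomFhom}, obtained by applying that theorem to the identity map $\mathrm{id}:(X,d)\to(X,d')$, whose induced map is the identity on $C(X)$ (the same collection for both metrics, since compatibility gives the same closed sets). Your handling of the applicability of the theorem when $X=Y$ as a set with two compatible metrics matches the paper's (implicit) reasoning.
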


A similar result was mentioned by Beer (see \cite{gb93}, Theorem 3.3.2).

\section{Atsuji Hyperspaces}\label{AtsjHprspcs}	
In this section we explore the conditions under which a subspace of a hyperspace $C(X)$ is an Atsuji space. 

The following lemma is a useful tool to derive some of our results in this section.
	


\begin{lemma}\label{UnifHomPrsrvsAtsj}
	Let two metric spaces be uniformly homeomorphic. If one of the spaces is Atsuji, then other is so.
\end{lemma}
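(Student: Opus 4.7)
The plan is to invoke the original definition of an Atsuji space recalled in the introduction: a metric space is Atsuji precisely when every continuous map from it into an arbitrary metric space is uniformly continuous. This characterization transfers very cleanly under uniform homeomorphism, and gives a short proof that sidesteps the intrinsic description involving $X'$ and $\epsilon$-neighborhoods.

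Concretely, let $f\colon X\to Y$ be a uniform homeomorphism and assume $X$ is Atsuji. To show $Y$ is Atsuji, I would take an arbitrary continuous map $g\colon Y\to Z$ into a metric space $Z$ and prove that $g$ is uniformly continuous. The composition $g\circ f\colon X\to Z$ is continuous, so the Atsuji property of $X$ forces $g\circ f$ to be uniformly continuous. Writing $g=(g\circ f)\circ f^{-1}$ and recalling that $f^{-1}$ is uniformly continuous by hypothesis, the conclusion follows from the standard fact that the composition of two uniformly continuous maps between metric spaces is uniformly continuous. Swapping the roles of $X$ and $Y$ gives the reverse implication.

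The only step that requires any verification is the closure of uniform continuity under composition, and this is a routine $\epsilon$--$\delta$ check: given $\epsilon>0$, pick $\eta>0$ so that $g\circ f$ sends $\eta$-close pairs in $X$ to $\epsilon$-close pairs in $Z$, then pick $\delta>0$ so that $f^{-1}$ sends $\delta$-close pairs in $Y$ to $\eta$-close pairs in $X$. There is no real obstacle here; the lemma is essentially a packaging statement, and its utility in what follows is that it lets one upgrade the intrinsic question of whether a hyperspace is Atsuji to the easier question of whether that hyperspace is uniformly homeomorphic to something already known to be Atsuji (which is exactly the content of Theorem~\ref{fHomFhom}).
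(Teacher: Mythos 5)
Your proof is correct, but it takes a genuinely different route from the paper's. You work with the functional characterization of Atsuji spaces recalled in the introduction (every continuous map into a metric space is uniformly continuous), under which the lemma reduces to the closure of uniform continuity under composition: $g=(g\circ f)\circ f^{-1}$ is uniformly continuous whenever $g\circ f$ and $f^{-1}$ are, and your $\epsilon$--$\eta$--$\delta$ chain for this is right. The paper instead verifies the intrinsic definition from its Preliminaries directly: it notes that $Y'$ is compact (as the homeomorphic image of the compact set $X'$), then argues by contradiction that $[N_\epsilon(Y')]^c$ is uniformly discrete, pulling a pair of sequences $x_n,y_n$ with $d'(x_n,y_n)\to 0$ back through $f^{-1}$ into some $[N_{\epsilon_1}(X')]^c$ (using that $f^{-1}(N_\epsilon(Y'))$ is an open set containing the compact $X'$, hence contains a uniform neighborhood $N_{\epsilon_1}(X')$). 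Your argument is shorter and essentially definitional, but it silently invokes the equivalence of the two characterizations of Atsuji spaces (Atsuji's theorem); since the paper states both as definitions of the same notion, this is fair game, though strictly speaking the equivalence is a nontrivial classical result. The paper's argument has the advantage of staying entirely within the intrinsic formulation ($X'$ compact, $[N_\epsilon(X')]^c$ uniformly discrete), which is the language used in all the subsequent hyperspace theorems, so no translation between characterizations is ever needed.
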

\begin{proof}
	Let $f:(X,d)\to (Y,d')$ be a uniform homeomorphism, and $X$ be an Atsuji space. Clearly, $Y'$ is compact in $Y$. Denote the subset $N_\epsilon (Y')\subset Y$ by $S$. If possible, suppose $S^c$ in $Y$ is not uniformly discrete. Then, there are $x_n,y_n$ in $S^c$ such that $d'(x_n,y_n)\to 0$. Since $f^{-1}(S)$ is open in $X$, and contains the compact set $X'$, so there is $\epsilon_1>0$ such that $N_{\epsilon_1}(X')\subset f^{-1}(S)$. By the uniform continuity of $f^{-1}$, $d'(x_n,y_n)\to 0$ implies $d(f^{-1}(x_n),f^{-1}(y_n))\to 0$, and therefore $[N_{\epsilon_1} (X')]^c$ in $X$ is not uniformly discrete, a contradiction.
\end{proof}
The above lemma also implies that uniformly equivalent metrics on a set $X$ generate the same Atsuji subspaces of $X$.
%
%

It is to be noted that Atsujiness is not preserved by homeomorphisms. For instance:

	\begin{example}\label{HomDzntPrsrvAtsjn}
		Consider the set of natural numbers $\mathbb N$ and the set $M:= \{1/n:n\in \mathbb N\}$, both endowed with the usual metric $d$ of $\mathbb R$.
	The map $f:(\mathbb N,d) \to (M,d)$, defined as $f(n)=1/n$, is a homeomorphism. The space $\mathbb N$ is an Atsuji space while $M$ is not.
	\end{example}

A subset $\mathcal C$ in a hyperspace $C(X)$ is said to be \textit{point-finite} if each point $x\in X$ belongs to at most finite number of elements of $\mathcal C$. It is known that a star-finite collection and a locally finite collection of subsets both are point-finite.

We denote a point-finite subset of $C(X)$ by $C_f(X)$, a point-finite subset of $C(X)$ containing all the singletons $\{x\}, x\in X$, by $C_{fs}(X)$, and a subset of $C(X)$ containing all the singletons $\{x\}, x\in X$, by $C^s(X)$.

Given a subset $\mathcal S$ in a hyperspace $C(X)$, let us denote the $\epsilon$-neighborhood of $\mathcal S$, $\bigcup\limits_{A\in \mathcal S}\{B\in C(X):H(A,B)<\epsilon\}$, by $\mathcal N_{\epsilon}(\mathcal S)$. Then, the space $C(X)$ is an Atsuji space if the set of limit points $[C(X)]'$ is compact, and for each $\epsilon>0$, $\big[\mathcal N_{\epsilon}([C(X)]')\big]^c$ is uniformly discrete.

For metric spaces $X$ and their hyperspaces $C(X)$, Gerald Beer \cite{gb85} proved that the following are equivalent: 
\begin{enumerate}
	\item $X$ is either compact or uniformly discrete;
	\item The set $[C(X)]'$ is compact, and for each $\epsilon>0$, $\big[\mathcal N_\epsilon([C(X)]')\big]^c$ is uniformly discrete.
\end{enumerate}

The class of Atsuji spaces contains compact metric spaces as well as uniformly discrete spaces. Replacing  compactness and uniform discreteness of $X$ in Beer's result with the Atsujiness of $X$, the set of limit points $[C(X)]'$ of the hyperspace $C(X)$ may fail to be compact. This is evident from the following example.
\begin{example}\label{C(X)'-noncmpct}
	The subset $P=\{e_m/n:m,n\in \mathbb N\}\cup \{0\}$ of the normed space $(l_2, \|\cdot\|_2)$ is an Atsuji subspace of $X$. The sequence $\{\{0,e_n\}\}_{n=1}^\infty$ is in $[C(P)]'$ with no convergent subsequence.
\end{example}
\noindent However, the set $\big[\mathcal N_\epsilon([C(X)]')\big]^c$ in $C(X)$ remains uniformly discrete for all $\epsilon>0$, as follows:
\begin{thm}\label{[N(C(X)')]^c-N(X')]^c}
	Let $X$ be an Atsuji space. Then, for each space $C^s(X)$ and $\epsilon>0$, the set $\big[\mathcal N_\epsilon([C^s(X)]')\big]^c$ is uniformly discrete.
\end{thm}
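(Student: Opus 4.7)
The plan is to argue by contradiction. Suppose $\big[\mathcal N_\epsilon([C^s(X)]')\big]^c$ fails to be uniformly discrete in $C^s(X)$; then choose sequences $\{A_n\},\{B_n\}$ in this set with $A_n\neq B_n$ and $H(A_n,B_n)\to 0$. After possibly interchanging $A_n$ and $B_n$ and passing to a subsequence, pick $a_n\in A_n\setminus B_n$, and then $b_n\in B_n$ with $d(a_n,b_n)\le H(A_n,B_n)+1/n$; since $a_n\neq b_n$ we obtain $I(a_n)\le d(a_n,b_n)\to 0$. By Theorem~\ref{AtsujiI(x)}, applied in the complete Atsuji space $X$, the sequence $\{a_n\}$ admits a Cauchy subsequence converging to some $x^*\in X'$; along this subsequence also $b_n\to x^*$. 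Because $C^s(X)$ contains every singleton, the singletons $\{a_n\}\in C^s(X)$ converge in Hausdorff to $\{x^*\}$, witnessing $\{x^*\}\in [C^s(X)]'=:\mathcal S$.

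Next I would exploit the Atsuji structure inside $X$. Put $U:=[N_{\epsilon/3}(X')]^c$; by the Atsuji hypothesis, $U$ is uniformly discrete in $X$ with some constant $\lambda>0$. A short argument modelled on Theorem~\ref{AtsujiI(x)} also yields $\mu:=\inf_{y\in U}I(y)>0$, since otherwise a sequence in the closed set $U$ with $I$-values tending to $0$ would admit a Cauchy subsequence whose limit lies both in $U$ and in $X'$, contradicting $U\cap X'=\emptyset$. For $n$ so large that $H(A_n,B_n)<\min(\lambda,\mu)$, I claim $A_n\cap U=B_n\cap U$: a point $y\in(A_n\cap U)\setminus B_n$ would admit some $z\in B_n$ with $d(y,z)<\min(\lambda,\mu)$, and whether $z\in U$ (uniform discreteness forces $z=y$) or $z\notin U$ with $z\neq y$ (the bound $I(y)\ge\mu$ is violated), either alternative is impossible. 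Symmetrically $B_n\cap U\subset A_n\cap U$, so the distinguishing point $a_n$ must lie in $N_{\epsilon/3}(X')$, in agreement with $a_n\to x^*\in X'$.

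The decisive step is to convert this structural information into a contradiction with $A_n\notin \mathcal N_\epsilon(\mathcal S)$. Since $\{x^*\}\in\mathcal S$ the hypothesis gives $\sup_{y\in A_n}d(y,x^*)\ge\epsilon$, so $A_n$ carries points far from $x^*$. Writing $C_n:=A_n\cap U=B_n\cap U$, one has decompositions $A_n=C_n\cup A_n'$ and $B_n=C_n\cup B_n'$ with $A_n',B_n'\subset N_{\epsilon/3}(X')$ and $a_n\in A_n'$; moreover $C(U)$ is itself uniformly discrete under Hausdorff (distance $\ge\lambda$), so on a further subsequence the outer parts $C_n$ stabilize to some fixed $C$. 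I expect to finish by showing $\{A_n\}$ has a Hausdorff-Cauchy subsequence whose limit $A^*\in C(X)$ is an accumulation point of $C^s(X)$ and hence lies in $\mathcal S$, forcing $H(A_n,A^*)\to 0$ and $A_n\in\mathcal N_\epsilon(\mathcal S)$ for large $n$. The principal technical obstacle is precisely this last extraction: lifting the convergence $a_n\to x^*$ of the distinguishing points to Hausdorff convergence of the full sets $A_n$, using the stabilization of $C_n$ together with the compactness of $X'$ to control the near-$X'$ parts $A_n'$.
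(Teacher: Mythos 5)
Your opening moves are sound: extracting $a_n\in A_n\setminus B_n$ and $b_n\in B_n$ with $I(a_n)\le d(a_n,b_n)\to 0$, invoking Theorem~\ref{AtsujiI(x)}, establishing $\mu=\inf_{y\in U}I(y)>0$, and proving $A_n\cap U=B_n\cap U$ for large $n$ are all correct. But the proof has a genuine gap exactly where you flag it, and the route you propose for closing it will not work. First, the ``stabilization'' of $C_n=A_n\cap U$ does not follow from the uniform discreteness of $C(U)$: that only says distinct closed subsets of $U$ are at Hausdorff distance $\ge\lambda$, and you have no control on $H(A_n,A_m)$ for $n\ne m$ (only on $H(A_n,B_n)$), so nothing forces a constant subsequence --- the $C_n$ can be pairwise distinct singletons wandering through a uniformly discrete part of $X$. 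Second, the hoped-for Hausdorff--Cauchy subsequence of $\{A_n\}$ need not exist: $N_{\epsilon/3}(X')$ is not totally bounded in general even when $X'$ is compact (the space $P$ of Example~\ref{C(X)'-noncmpct} already shows sequences of two-point sets with no convergent subsequence), so the near-$X'$ parts $A_n'$ cannot be tamed by compactness of $X'$. Third, even if a limit $A^*$ existed, $[C^s(X)]'$ is the derived set of the (arbitrary, beyond containing singletons) collection $C^s(X)$, so you would still have to show $A^*\in C^s(X)$ and that it is genuinely an accumulation point of that collection.

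The paper's proof sidesteps all of this by never constructing a limit in the hyperspace. It uses only that the singletons $\{l\}$, $l\in X'$, belong to $[C^s(X)]'$; the hypothesis $P_n\notin\mathcal N_\epsilon([C^s(X)]')$ then gives $H(P_n,\{l\})\ge\epsilon$ for all $l\in X'$, from which it extracts a point $p_n\in P_n$ with $d(p_n,X')\ge\epsilon-\delta$, pairs it with $q_n\in Q_n$ at distance $<1/n$, and derives the contradiction entirely inside $X$ from the uniform discreteness of $[N_{\epsilon_1}(X')]^c$ (splitting into cases according to whether the $q_n$ accumulate at the compact set $X'$ or stay uniformly away from it). In short: the membership of the singletons over $X'$ in $[C^s(X)]'$ is the key lemma you are missing; with it, the contradiction lands in $X$ rather than in $C(X)$, and no hyperspace compactness is needed.
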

\begin{proof}
		If possible, suppose for some $\epsilon>0$, $\big[\mathcal N_\epsilon([C^s(X)]')\big]^c$ in $C^s(X)$ is not uniformly discrete. So, for each $n\in \mathbb N$, there are $P_n,Q_n\in \big[\mathcal N_\epsilon([C^s(X)]')\big]^c$ such that $H(P_n,Q_n)< 1/n$. Since $P_n\in \big[\mathcal N_\epsilon([C^s(X)]')\big]^c$, so $H(P_n,\{l\})\geq \epsilon>\epsilon-\delta, ~\forall l\in X'$, and for some $\delta$ with $\epsilon>\delta>0$. This implies, there exists a sequence $\{p_n\}$ with $p_n\in P_n\cap [N_{\epsilon'}(X')]^c$, where $\epsilon'=\epsilon-\delta$. Since $H(P_n,Q_n)< 1/n$, so for the sequence $\{p_n\}$, there is a sequence $\{q_n\}$ with $q_n\in Q_n$ such that $d(p_n,q_n)< 1/n\to 0$. For the sequence $\{q_{n}\}$, two cases arise: infinitely many points of $\{q_{n}\}$, say $\{q_{n_{j}}\}_{j=1}^{\infty}$, are either in $X'$ or in $[X']^c$. If $\{q_{n_{j}}\}_{j=1}^{\infty}$ is in $X'$, then by the compactness of $X'$, $\{q_{n_{j}}\}$ has a limit point in $X'$. So, the sequence $\{p_n\}$ will have the same limit point as well, in $[N_{\epsilon'}(X')]^c$; and hence $[N_{\epsilon'}(X')]^c$ is not uniformly discrete, which is a contradiction. If $\{q_{n_{j}}\}_{j=1}^{\infty}$ is in $[X']^c$ and it has no limit point, then by the compactness of $X'$ there is a $\delta'> 0$ such that $\{q_{n_{j}}\}_{j=1}^{\infty}\subset [N_{\delta'}(X')]^c$. Indeed, if $\{q_{n_{j}}\}_{j=1}^{\infty}\not\subset [N_{\delta'}(X')]^c$, then for each $k\in \mathbb N$, there is $x'_k\in X'$ such that $B(x'_k,1/k)\cap \{q_{n_{j}}\}_{j=1}^{\infty}\neq \emptyset$, and since $\{x'_k\}$ has a convergent subsequence, so $\{q_{n_{j}}\}_{j=1}^{\infty}$ will have a limit point. Hence, choosing $\epsilon_1=\min\{\epsilon',\delta'\}$, we get the subsequences $\{q_{n_{j}}\}_{j=1}^{\infty}$ and $\{p_{n_{j}}\}_{j=1}^{\infty}$ are in $[N_{\epsilon_1}(X')]^c$, and so $[N_{\epsilon_1}(X')]^c$ is not uniformly discrete, a contradiction.
	\end{proof}
Although, in case of Atsuji space $X$, there is a class of subsets of $C(X)$, such that the set of limit points of each subset from this class is compact.

\begin{thm}\label{X'-[CfX]'-cmplt}
	If $X$ is a metric space for which $X'$ is complete, then for each subset $C_f(X)\subset C(X)$, the set $[C_f(X)]'$ is complete.
\end{thm}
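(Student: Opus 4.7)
The plan is to take a Cauchy sequence $\{A_n\}$ in $[C_f(X)]'$ under the Hausdorff metric $H$ and exhibit a limit inside $[C_f(X)]'$. I would proceed in three stages: first, prove that every $A \in [C_f(X)]'$ satisfies $A \subseteq X'$; second, use completeness of $X'$ to extract a Hausdorff limit $A_\infty \in C(X)$ with $A_\infty \subseteq X'$; third, verify that $A_\infty$ is itself a limit point of $C_f(X)$.

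The first stage is where point-finiteness enters and is essentially the only substantive step. Fix $A_n \in [C_f(X)]'$ and $a \in A_n$. Since $A_n$ is a limit point of $C_f(X)$, I would pick a sequence $\{B_k\} \subseteq C_f(X) \setminus \{A_n\}$ with $H(B_k, A_n) \to 0$, and corresponding points $b_k \in B_k$ with $b_k \to a$. If $a$ were isolated in $X$, then $b_k = a$ for all large $k$, whence $a \in B_k$ eventually. Point-finiteness forces the collection $\{B \in C_f(X) : a \in B\}$ to be finite, so $\{B_k\}$ is eventually trapped in a finite set $\mathcal F \subseteq C_f(X) \setminus \{A_n\}$. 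Since $A_n \notin \mathcal F$, the quantity $\min_{B \in \mathcal F} H(B, A_n)$ is strictly positive, contradicting $H(B_k, A_n) \to 0$. Hence $a \in X'$, so $A_n \subseteq X'$.

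For the second stage, $X'$ is closed in $X$, so each $A_n$ is a closed nonempty subset of $X'$; the Cauchy condition makes the tail of $\{A_n\}$ a Cauchy sequence of eventually finite pairwise distances in the Hausdorff hyperspace of $X'$. The classical fact that this hyperspace inherits completeness from $X'$ then yields a nonempty closed $A_\infty \subseteq X'$ with $A_n \to A_\infty$, and $A_\infty$ is automatically closed in $X$, hence in $C(X)$. For the third stage, for each $n$ I would pick $B_n \in C_f(X) \setminus \{A_n, A_\infty\}$ with $H(B_n, A_n) < 1/n$; this is possible because the definition of limit point supplies infinitely many candidates in $C_f(X) \setminus \{A_n\}$ near $A_n$, and excluding $A_\infty$ removes at most one. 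The triangle inequality then gives $H(B_n, A_\infty) \to 0$ with $B_n \in C_f(X) \setminus \{A_\infty\}$, which witnesses $A_\infty \in [C_f(X)]'$. The main obstacle is squarely in the first stage; after $A_n \subseteq X'$ is established, the second stage is standard hyperspace completeness and the third is a direct perturbation.
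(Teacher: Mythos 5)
Your proposal is correct and follows essentially the same route as the paper: the heart of both arguments is that point-finiteness forces every member of $[C_f(X)]'$ to be contained in $X'$, after which completeness of the hyperspace $C(X')$ together with the closedness of the derived set $[C_f(X)]'$ finishes the job. The only cosmetic differences are that you establish the key inclusion by contradiction via an isolated point (the paper argues directly that the approximating points must take infinitely many distinct values) and that you verify the closedness of $[C_f(X)]'$ by hand in your third stage rather than citing it.
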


\begin{proof}
	 For a $C\in \big[C_f(X)\big]'$, there is a sequence of distinct terms $\{C_n\}$ in $C_f(X)$ satisfying: for each $n \in \mathbb N, ~ \exists~ Z_n\in \mathbb N$ such that $H(C_r,C)$ $<1/n~ \forall r\geq Z_n.$ This implies, $C\subset N_{1/n}(C_{Z_n})$ for all $n\in \mathbb N$, and so for all $c\in C$, for all $n\in \mathbb N,~ \exists~ c_n \in C_{Z_n}$ such that $d(c_n,c)<1/n$. Since the collection $C_f(X)$ is point-finite, so at most finitely many $c_n$'s can be equal. Thus $c\in X'$, and so $C\in C(X')$. Hence $\big[C_f(X)\big]'\subset C(X')$. We know that, for a complete metric space $W$, the Hausdorff hyperspace $C(W)$ is complete (\cite{gb93}, Theorem 3.2.4). Since $X'$ is complete, the set $C(X')$ is complete in $C(X)$. And, because the set  $[C_f(X)]'$ is a closed subset in $C(X)$, therefore $\big[C_f(X)\big]'$ is complete.
\end{proof}
A similar proof gives the following.
\begin{thm}\label{X'-[CfX]'-cmpct}
	If $X$ is a metric space for which $X'$ is compact, then for each subset $C_f(X)\subset C(X)$, the set $[C_f(X)]'$ is compact.
\end{thm}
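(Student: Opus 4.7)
The plan is to mirror the proof of Theorem \ref{X'-[CfX]'-cmplt} almost verbatim, substituting the compactness of the Hausdorff hyperspace of a compact metric space for the completeness of the hyperspace of a complete metric space at the final step. The only thing that genuinely changes is the ambient result about $C(X')$ that we invoke.

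First I would establish the inclusion $[C_f(X)]'\subset C(X')$ by exactly the argument already given in the proof of Theorem \ref{X'-[CfX]'-cmplt}. Namely, if $C\in[C_f(X)]'$, take a sequence $\{C_n\}$ of distinct elements of $C_f(X)$ with $H(C_n,C)\to 0$. Fix $c\in C$ and, for each $n$, pick $c_n\in C_n$ with $d(c_n,c)<1/n$. Point-finiteness of $C_f(X)$ forces the set $\{c_n:n\in\mathbb N\}$ to be infinite (any fixed value can be hit only by the finitely many $C_n$'s containing it), so $c$ is a limit point of $X$. Thus $C\subset X'$, and since $C$ is closed in $X$ it lies in $C(X')$.

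Second, I would invoke the classical fact that the Hausdorff hyperspace $C(W)$ of a compact metric space $W$ is itself compact (the counterpart of the completeness statement used earlier; see e.g.\ Beer \cite{gb93}). Applied to $W=X'$, this gives that $C(X')$ is compact. Finally, $[C_f(X)]'$ is closed in $C(X)$ as the set of limit points is always closed, so by the inclusion established above it is a closed subset of the compact space $C(X')$, hence compact.

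I do not anticipate a serious obstacle: the structural lemma that $[C_f(X)]'\subset C(X')$ is already proved in Theorem \ref{X'-[CfX]'-cmplt}, and the promotion from ``complete'' to ``compact'' only relies on the well-known fact that $C(W)$ inherits compactness from $W$. If anything, the one point worth double-checking is that $[C_f(X)]'$, being closed in $C(X)$ and contained in $C(X')$, is automatically closed as a subset of $C(X')$ in its subspace topology, which is immediate.
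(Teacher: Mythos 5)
Your proposal is correct and matches the paper's intended argument: the paper explicitly states that ``a similar proof'' to that of Theorem \ref{X'-[CfX]'-cmplt} gives this result, and you reproduce exactly that proof, replacing the completeness of $C(X')$ with its compactness (via the same hyperspace inheritance theorem) and concluding that the closed set $[C_f(X)]'$ sits inside the compact set $C(X')$.
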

A subset $A$ in a metric space $X$ is said to be \textit{totally bounded} if for each $\epsilon>0 $, there are finitely many points $x_1,x_2,...,x_n$ in $X$ such that $A\subset \bigcup\limits_{i=1}^n B(x_i,\epsilon)$. 

The following lemma is immediate.
\begin{lemma}\label{TtlyBnddLemma}
	A subset $A$ in a metric space $X$ is totally bounded if and only if for each $\epsilon>0 $, there are finitely many points $a_1,a_2,...,a_n$ in $A$ such that $A\subset \bigcup\limits_{i=1}^n B(a_i,\epsilon)$.
\end{lemma}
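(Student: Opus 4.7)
The equivalence is one direction trivial and one direction a standard $\epsilon/2$-trick, so the plan is short. The backward implication is immediate from the definition: any finite cover of $A$ by $\epsilon$-balls centered at points of $A$ is in particular a finite cover by $\epsilon$-balls centered at points of $X$. So the whole content of the lemma sits in the forward direction.

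For the forward direction, I would start with an arbitrary $\epsilon>0$ and apply the definition of total boundedness to $\epsilon/2$: there exist points $x_1,\dots,x_n \in X$ with $A \subset \bigcup_{i=1}^n B(x_i,\epsilon/2)$. I would then discard indices $i$ for which $B(x_i,\epsilon/2) \cap A = \emptyset$ (these contribute nothing to the cover of $A$) and, for each remaining index, choose a single point $a_i \in B(x_i,\epsilon/2) \cap A$.

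The key step is the triangle inequality estimate $B(x_i,\epsilon/2) \cap A \subset B(a_i,\epsilon)$: for any $y$ in the left-hand set, $d(y,a_i) \le d(y,x_i) + d(x_i,a_i) < \epsilon/2 + \epsilon/2 = \epsilon$. Taking the union over $i$, I obtain $A \subset \bigcup_i B(a_i,\epsilon)$ with each $a_i \in A$, as required.

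There is no real obstacle here; the only thing to be careful about is not to skip the step of throwing away indices whose ball misses $A$, since the definition of total boundedness does not guarantee that every center's ball meets $A$, and without this reduction the choice of $a_i$ would not be possible.
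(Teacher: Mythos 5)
Your proof is correct and is exactly the standard $\epsilon/2$ argument one would expect; the paper itself gives no proof, simply declaring the lemma immediate, so your write-up supplies precisely the intended argument (including the necessary step of discarding centers whose balls miss $A$).
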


\begin{thm}\label{X-C_{fs}(X)}
	   If $X$ is an Atsuji space, then a completion of each subspace $C_{fs}(X)$ of $C(X)$ is Atsuji.
\end{thm}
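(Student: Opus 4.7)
The plan is to invoke Theorem \ref{AtsujiI(x)} on the metric space $(C_{fs}(X), H)$: it suffices to show that every sequence $\{A_n\}$ in $C_{fs}(X)$ whose isolation quantities $\mathcal I(A_n) := H(A_n,\, C_{fs}(X)\setminus\{A_n\})$ tend to $0$ admits a Cauchy subsequence. Given such a sequence, for each $n$ I would pick a witness $B_n \in C_{fs}(X)\setminus\{A_n\}$ with $H(A_n, B_n) < \mathcal I(A_n) + 1/n$, so that $H(A_n, B_n)\to 0$ while keeping $A_n \neq B_n$.

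The central step is to establish that $d_H(A_n,\,[C_{fs}(X)]')\to 0$, i.e.\ for every $\epsilon>0$, $A_n\in\mathcal N_\epsilon([C_{fs}(X)]')$ for all sufficiently large $n$. Since $C_{fs}(X)$ contains all singletons, it qualifies as a $C^s(X)$, so Theorem \ref{[N(C(X)')]^c-N(X')]^c} furnishes, for the given $\epsilon$, a $\delta\in(0,\epsilon/2)$ such that any two distinct elements of $[\mathcal N_{\epsilon/2}([C_{fs}(X)]')]^c$ are $H$-separated by at least $\delta$. For $n$ large enough that $H(A_n, B_n)<\delta$, the condition $A_n\neq B_n$ forces at least one of $A_n, B_n$ to lie in $\mathcal N_{\epsilon/2}([C_{fs}(X)]')$; picking $C\in[C_{fs}(X)]'$ within $\epsilon/2$ of that one, the triangle inequality yields $H(A_n, C) < \delta + \epsilon/2 < \epsilon$, as required.

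To conclude, I would invoke Theorem \ref{X'-[CfX]'-cmpct}: since $X$ is Atsuji, $X'$ is compact, and $C_{fs}(X)$ is point-finite, so $[C_{fs}(X)]'$ is compact in $C(X)$. Hence the distance $d_H(A_n,\,[C_{fs}(X)]')$ is attained by some $C_n\in[C_{fs}(X)]'$, and $H(A_n, C_n)\to 0$; by compactness, a subsequence $C_{n_k}\to C\in[C_{fs}(X)]'$, and then the triangle inequality gives $A_{n_k}\to C$, the desired Cauchy subsequence.

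The principal obstacle is the bootstrap in the second paragraph: exploiting the uniform-discreteness gap from Theorem \ref{[N(C(X)')]^c-N(X')]^c} together with $H(A_n, B_n)\to 0$ to force $A_n$ into arbitrarily small Hausdorff neighborhoods of $[C_{fs}(X)]'$. Calibrating $\delta$ below $\epsilon/2$ so that the triangle inequality transfers the closeness from whichever of $A_n,B_n$ is trapped in $\mathcal N_{\epsilon/2}([C_{fs}(X)]')$ onto the other is the delicate bookkeeping step; once that is in place, the compactness of $[C_{fs}(X)]'$ closes the argument routinely.
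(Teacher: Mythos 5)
Your argument is correct, but it takes a genuinely different route from the paper. The paper works with the concrete completion $\overline{C}_{fs}(X)$ (the closure in the complete space $C(X)$) and verifies the definition of Atsuji directly: it uses $[C_{fs}(X)]'\subset C(X')$ and the total boundedness of $C(X')$ to get compactness of $[\overline{C}_{fs}(X)]'$, invokes Theorem \ref{[N(C(X)')]^c-N(X')]^c} only for the uniform discreteness of the complements of the $\epsilon$-neighborhoods, and then passes to an arbitrary completion via the isometry of completions and Lemma \ref{UnifHomPrsrvsAtsj}. You instead apply the Jain--Kundu sequential criterion (Theorem \ref{AtsujiI(x)}) to $(C_{fs}(X),H)$ itself, which bypasses the closure and treats all completions at once, with no appeal to uniqueness of completions. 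Both proofs hinge on Theorem \ref{[N(C(X)')]^c-N(X')]^c}, but you exploit it differently: the separation gap $\delta$ forces your nearly-isolated pairs $A_n\neq B_n$ into arbitrarily small neighborhoods of $[C_{fs}(X)]'$, after which compactness of $[C_{fs}(X)]'$ (Theorem \ref{X'-[CfX]'-cmpct}, rather than the paper's total-boundedness argument) extracts the Cauchy subsequence. Your bootstrap in the second paragraph is sound, including the implicit shrinking of $\delta$ below $\epsilon/2$ (uniform discreteness survives decreasing the gap). Two minor points to note in a polished version: if $[C_{fs}(X)]'=\emptyset$ your separation argument shows no pair $A_n\neq B_n$ can be $\delta$-close, so the hypothesis of the sequential criterion is vacuous and the conclusion still holds; and the application of Theorem \ref{AtsujiI(x)} to the (possibly extended-valued) metric $H$ deserves a word, though the paper itself treats $(C(X),H)$ as a metric space throughout, so this is consistent with its conventions.
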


\begin{proof}
	Let $\overline{C}_{fs}(X)$ denote the closure of $C_{fs}(X)$ in $C(X)$.  Since $C(X)$ is complete, the space $\overline{C}_{fs}(X)$ is a completion of ${C}_{fs}(X)$. 
	By the proof of Theorem \ref{X'-[CfX]'-cmplt}, we have $[C_{fs}(X)]'\subset C(X')$. Since $X'$ is totally bounded, $C(X')$ is totally bounded (\cite{gb93}, Theorem 3.2.4). Hence, by Lemma \ref{TtlyBnddLemma}, the set $[C_{fs}(X)]'$ is totally bounded in the space $\overline C_{fs}(X)$, which implies $[\overline{C}_{fs}(X)]'$ is compact. And, by Theorem \ref{[N(C(X)')]^c-N(X')]^c}, the set $\overline{C}_{fs}(X)\setminus \mathcal N_\epsilon([\overline C_{fs}(X)]')$ is uniformly discrete. Thus we proved, $\overline{C}_{fs}(X)$ is an Atsuji space. Since any two completions of a metric space are isometric, so by Lemma \ref{UnifHomPrsrvsAtsj}, each completion of $C_{fs}(X)$ is Atsuji.
\end{proof}

\begin{thm}
	If some $C_{fs}(X)$ is an Atsuji space, then $X$ is Atsuji.
\end{thm}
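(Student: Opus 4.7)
The plan is to verify the two defining properties of the Atsuji condition directly for $X$: compactness of $X'$, and uniform discreteness of $[N_\epsilon(X')]^c$ for every $\epsilon>0$. The main device is the isometric embedding $I:X\to C(X)$, $x\mapsto\{x\}$ (note $H(\{x\},\{y\})=d(x,y)$), whose image is contained in the hypothesized $C_{fs}(X)$. A recurring observation I would rely on is that Hausdorff limits of singletons are singletons: if $H(\{x_n\},A)\to 0$ then $\operatorname{diam}(A)\le 2H(\{x_n\},A)\to 0$, so $A=\{a\}$; and moreover $\{a\}\in[C_{fs}(X)]'$ forces $a\in X'$, because sets $B_n\in C_{fs}(X)$ distinct from $\{a\}$ and converging to $\{a\}$ in $H$ must contain points different from $a$ that converge to $a$.

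First I would show $X'$ is compact. Since $I(X)$ is closed in $C(X)$ by the observation, and $X'$ is closed in $X$ (limit sets are always closed), the image $I(X')$ is closed in $C(X)$. I would then verify $I(X')\subset[C_{fs}(X)]'$: a limit point $x\in X'$ is the limit of distinct $x_n\in X$, producing distinct singletons $\{x_n\}\to\{x\}$ in $C_{fs}(X)$, so $\{x\}\in[C_{fs}(X)]'$. Because $C_{fs}(X)$ is Atsuji, $[C_{fs}(X)]'$ is compact, and consequently the closed subset $I(X')$ is compact; the isometry then transfers compactness back to $X'$.

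Next I would establish uniform discreteness of $[N_\epsilon(X')]^c$ for each $\epsilon>0$ by contradiction. Suppose there exist $\epsilon>0$ and sequences $\{x_n\},\{y_n\}\subset[N_\epsilon(X')]^c$ with $x_n\ne y_n$ and $d(x_n,y_n)\to 0$; then $H(\{x_n\},\{y_n\})=d(x_n,y_n)\to 0$. The key intermediate claim is that there exists $\eta_0>0$ with $\{x_n\},\{y_n\}\in\big[\mathcal N_{\eta_0}([C_{fs}(X)]')\big]^c$ for all sufficiently large $n$. Granting this, the uniform discreteness of $\big[\mathcal N_{\eta_0}([C_{fs}(X)]')\big]^c$ in the Atsuji space $C_{fs}(X)$ would bound $H(\{x_n\},\{y_n\})$ away from zero, contradicting $d(x_n,y_n)\to 0$.

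To justify the claim I would argue again by contradiction: if it failed, a diagonal extraction would yield a subsequence $\{x_{n_k}\}$ and sets $A_k\in[C_{fs}(X)]'$ with $H(\{x_{n_k}\},A_k)<1/k$; compactness of $[C_{fs}(X)]'$ would furnish $A_{k_j}\to A^*$, and the initial observation would force $A^*=\{a\}$ with $a\in X'$. Then $x_{n_{k_j}}\to a\in X'$ would contradict $d(x_{n_{k_j}},X')\ge\epsilon$, and the same reasoning handles $\{y_n\}$, allowing a common $\eta_0$ to be chosen. The main obstacle is exactly this passage: converting Hausdorff proximity between singletons and the limit set $[C_{fs}(X)]'$ into genuine accumulation of the underlying points in $X$, with the compactness of $[C_{fs}(X)]'$ as the only available compactness resource.
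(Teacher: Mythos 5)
Your proof is correct, but it takes a substantially longer route than the paper's. The paper's argument is two lines: Hausdorff limits of singletons are singletons, so $S(X)=\{\{x\}:x\in X\}$ is a \emph{closed} subset of the Atsuji space $C_{fs}(X)$; a closed subspace of an Atsuji space is Atsuji; and $X$ is isometric to $S(X)$ via $x\mapsto\{x\}$, so Lemma \ref{UnifHomPrsrvsAtsj} transfers Atsujiness back to $X$. You instead verify the internal two-part characterization of Atsujiness for $X$ directly: you pull compactness of $X'$ back through the isometric embedding $I$ (using $I(X')\subset[C_{fs}(X)]'$ and closedness of $I(X')$), and you obtain uniform discreteness of $[N_\epsilon(X')]^c$ by showing the corresponding singletons eventually lie in some $\big[\mathcal N_{\eta_0}([C_{fs}(X)]')\big]^c$, the key step being your compactness extraction showing that a singleton Hausdorff-close to $[C_{fs}(X)]'$ must have its underlying point close to $X'$. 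All the steps check out (in particular the observation that $\{a\}\in[C_{fs}(X)]'$ forces $a\in X'$, and the passage from $A^*\in[C_{fs}(X)]'$ via closedness of the limit-point set). In effect you have reproved, in this specific instance, the hereditary fact that closed subspaces of Atsuji spaces are Atsuji — which the paper simply invokes without proof. Your version is more self-contained and makes the mechanism explicit; the paper's is shorter and more modular, reusing Lemma \ref{UnifHomPrsrvsAtsj} and a standard hereditary property.
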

\begin{proof}
	Since the space $C_{fs}(X)$ is Atsuji, and the set $S(X):=\{\{x\}:x\in X\}$ is a closed subset in $C_{fs}(X)$, so $S(X)$ is an Atsuji subspace of $C_{fs}(X)$. Because the space $X$ is uniformly homeomorphic to $S(X)$, using Lemma \ref{UnifHomPrsrvsAtsj}, $X$ is an Atsuji space.
\end{proof}

\begin{rem}
	\textit{For a metric space $X$ with compact $X'$, the collection $A(X)$ of nonempty Atsuji subsets of $X$ need not be a point-finite collection.} For, if the set $X'$ is compact and the collection $A(X)$ is point-finite, then by Corollary \ref{X'-[CfX]'-cmpct}, $[A(X)]'$ is compact. Since $A(X)$ is dense in $C(X)$ (see \cite{gb85}, p. 657), so $[C(X)]'$ is compact, which is not true in general by Example \ref{C(X)'-noncmpct}.
\end{rem}
 Theorem \ref{X-C_{fs}(X)} gives rise to a question: What are the maximal Atsuji subspaces in $C(X)$, provided $X$ is Atsuji? We elaborate the question as follows:
 
\textbf{Open Problem:}
	For a given Atsuji space $X$, consider the collection $\mathcal F$ of the closures of all subsets $C_{fs}(X)$ of $C(X)$. We endow $\mathcal F$ with a partial order relation `$\leq$' as follows: For $\mathcal A, \mathcal B\in \mathcal F$, $\mathcal A\leq \mathcal B$ if and only if $\mathcal A\subset \mathcal B$. Does the partially ordered set $(\mathcal F,\leq)$ have a maximal element? If yes, can one explicitly find the maximal element? 

\begin{thm}
	Let $(X,d)$ be an Atsuji space. If $d'$ is another compatible Atsuji metric on $X$, then the hyperspaces $(C(X),H)$ and $(C(X),H')$ have the same Atsuji subsets.
\end{thm}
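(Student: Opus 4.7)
The plan is to reduce the statement to the fact, already noted after Lemma \ref{UnifHomPrsrvsAtsj}, that uniformly equivalent metrics generate the same Atsuji subsets. Concretely, I would show first that two compatible Atsuji metrics on the same set are automatically uniformly equivalent, and then transfer this uniform equivalence to the hyperspace level via Corollary \ref{d,d'-H,H'-unif. equi.}.

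The first step is the key observation. Since $d$ and $d'$ are compatible, the identity map $\mathrm{id}:(X,d)\to(X,d')$ is a homeomorphism, hence continuous. Because $(X,d)$ is Atsuji, every continuous map from $(X,d)$ into a metric space is uniformly continuous, so $\mathrm{id}:(X,d)\to(X,d')$ is uniformly continuous. Symmetrically, since $(X,d')$ is also Atsuji, the inverse identity $\mathrm{id}:(X,d')\to(X,d)$ is uniformly continuous. Thus $d$ and $d'$ are uniformly equivalent on $X$.

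Having established uniform equivalence downstairs, Corollary \ref{d,d'-H,H'-unif. equi.} immediately gives that the Hausdorff distances $H$ and $H'$ are uniformly equivalent on $C(X)$. In particular, for any subset $\mathcal{S}\subset C(X)$, the identity map $(\mathcal{S},H)\to(\mathcal{S},H')$ is a uniform homeomorphism. By Lemma \ref{UnifHomPrsrvsAtsj}, uniform homeomorphisms preserve the Atsuji property in both directions, so $\mathcal{S}$ is Atsuji with respect to $H$ if and only if it is Atsuji with respect to $H'$. This is exactly the assertion that $(C(X),H)$ and $(C(X),H')$ have the same Atsuji subsets.

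There is no real obstacle here; the statement is essentially a packaging of earlier results. The only subtle point worth emphasizing in the write-up is the first step: uniform equivalence of $d$ and $d'$ is not part of the hypothesis, but is automatic because Atsujiness forces every continuous self-bijection of $X$ to have uniformly continuous inverse as well.
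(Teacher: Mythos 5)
Your proof is correct, and its second half coincides with the paper's: once $d$ and $d'$ are known to be uniformly equivalent, both you and the authors pass to uniform equivalence of $H$ and $H'$ via Corollary \ref{d,d'-H,H'-unif. equi.} and conclude with Lemma \ref{UnifHomPrsrvsAtsj}. Where you genuinely diverge is the first step. The paper obtains uniform equivalence of $d$ and $d'$ indirectly: it cites Theorem 2.2 of the Beer--Himmelberg--Prikry--Van Vleck paper to get $\tau_H=\tau_{H'}$ on the hyperspace and then deduces uniform equivalence of the base metrics from that. You instead apply the defining property of an Atsuji space (every continuous map into a metric space is uniformly continuous, as stated in the paper's introduction) to the identity map $\mathrm{id}:(X,d)\to(X,d')$ and its inverse, using Atsujiness of each metric once. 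Your route is more elementary and self-contained: it needs no external citation and no detour through hyperspace topologies, and it isolates the one place where the hypothesis that \emph{both} metrics are Atsuji is actually used. The only thing to flag in a final write-up is that you are invoking the ``continuous implies uniformly continuous'' characterization of Atsuji spaces rather than the limit-point/uniform-discreteness definition given in Section \ref{Pre}; these are classically equivalent, and the paper itself states the former as the definition in the introduction, so this is a presentational point rather than a gap.
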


\begin{proof}
	By Theorem 2.2 in \cite{gb87}, we get $\tau_H=\tau_{H'}$, where $\tau_H$ is the topology generated by $H$. This implies, the metrics $d, d'$ are uniformly equivalent. Then using Corollary \ref{d,d'-H,H'-unif. equi.}, we get the metrics $H, H'$ are uniformly equivalent. And, so by Lemma \ref{UnifHomPrsrvsAtsj}, the hyperspaces $(C(X),H)$ and $(C(X),H')$ have the same Atsuji subsets. 
\end{proof}


\section{Fixed Point Results}\label{FxdPntsRslts}
Here, we discuss the fixed point results for continuous mappings of Atsuji spaces. 

For a metric space $X$, let the set $P(X)$ be endowed with the Hausdorff distance $H$. Then, $(P(X),H)$ is an extended-real valued pseudo metric space.

\begin{lemma}\label{d(x_n,A_n) to d(x,A)}
	Let $\{x_n\}$ be a sequence converging to $x$ in a metric space $X$. 
	If a sequence $\{A_n\}$ converges to $A$ in $P(X)$, then the sequence $\{d(x_n,A_n)\}$ converges to $d(x,A)$.
\end{lemma}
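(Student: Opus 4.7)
The plan is to reduce this to the standard inequality
\[
|d(x,A) - d(y,B)| \le d(x,y) + H(A,B) \qquad (x,y \in X,\ A,B \in P(X)),
\]
which will give convergence in one line. So first I would establish this inequality, and then apply it to the pairs $(x_n, A_n)$ and $(x, A)$.

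To prove the inequality, I would split it into two pieces. The first piece is the straightforward
\[
|d(x,A) - d(y,A)| \le d(x,y),
\]
which follows from $d(x,A) \le d(x,y) + d(y,a)$ for every $a \in A$ (take the infimum over $a$), and then symmetrically. The second piece is
\[
|d(y,A) - d(y,B)| \le H(A,B):
\]
for any $a \in A$, $d(y,B) \le d(y,a) + d(a,B) \le d(y,a) + \sup_{a' \in A} d(a',B)$, so taking the infimum over $a$ gives $d(y,B) \le d(y,A) + H(A,B)$, and the reverse direction is symmetric. Adding the two pieces via the triangle inequality produces the desired bound.

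Once that inequality is in hand, the lemma is immediate: for all sufficiently large $n$ (so that $H(A_n,A)$ is finite, which is guaranteed by $H(A_n,A)\to 0$ in the extended-real-valued pseudometric $P(X)$), we have
\[
|d(x_n,A_n) - d(x,A)| \le d(x_n,x) + H(A_n,A),
\]
and both terms on the right tend to $0$ by hypothesis, so $d(x_n,A_n) \to d(x,A)$.

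I do not expect any real obstacle here; the only mild subtlety is that $H$ on $P(X)$ is an extended-valued pseudometric, but since $H(A_n,A) \to 0$ we may discard the finitely many indices where it is infinite and work in the usual finite regime from some $n$ onward.
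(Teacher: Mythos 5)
Your proposal is correct and follows essentially the same route as the paper, which likewise cites the Lipschitz continuity of $x\mapsto d(x,K)$ together with the inequality $d(x,K_1)\leq d(x,K_2)+H(K_1,K_2)$; you have merely written out the details the paper leaves implicit. No gaps.
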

\begin{proof}
	The proof follows from the continuity of the functional $d(\cdot,K_1):X\to \mathbb R$, and the inequality $d(x,K_1)\leq d(x,K_2)+H(K_1,K_2)$, where $K_1,K_2\in P(X)$.
\end{proof}

Given a multivalued map $f$ from a metric space $X$ to $P(X)$, a point $x\in X$ is said to be \textit{almost fixed point} of $f$, if $\inf\{d(x,y):y\in f(x)\}=0$.

Let $X$ be a complete metric space. Consider that $f:X\to P(X)$ is a continuous map such that for each $\epsilon>0$, there is an $x\in X$ satisfying $d(x,f(x))<\epsilon$; then $f$ does not have an almost fixed point, in general.
Although, in case of Atsuji space $X$, we have the following.
\begin{thm}\label{AtsjAlmstFxdPnt}
	Let $X$ be an Atsuji space, and $f:X\to P(X)$ be a continuous map such that for each $\epsilon>0$, there is an $x\in X$ satisfying $d(x,f(x))<\epsilon$. Then $f$ has an almost fixed point.
\end{thm}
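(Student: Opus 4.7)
The plan is to use the hypothesis to build a near-fixed sequence, then exploit the Atsuji characterization of Theorem \ref{AtsujiI(x)} to pass to a convergent subsequence whose limit is almost fixed. By hypothesis, for each $n\in\mathbb N$ choose $x_n\in X$ with $d(x_n,f(x_n))<1/n$. Since $d(x_n,f(x_n))=\inf_{y\in f(x_n)}d(x_n,y)<1/n$, there is $y_n\in f(x_n)$ with $d(x_n,y_n)<1/n$.

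If $y_n=x_n$ for some $n$, then $x_n\in f(x_n)$, so $d(x_n,f(x_n))=0$ and $x_n$ is an almost fixed point; the conclusion holds. Otherwise $y_n\in X\setminus\{x_n\}$ for every $n$, so the isolation functional satisfies
$I(x_n)=d(x_n,X\setminus\{x_n\})\le d(x_n,y_n)<1/n$,
and hence $I(x_n)\to 0$. An Atsuji space is complete (a Cauchy sequence with no convergent subsequence would eventually lie in some uniformly discrete $[N_\epsilon(X')]^c$, contradicting the Cauchy condition), so $X$ coincides with its completion $\hat X$, which is Atsuji. Theorem \ref{AtsujiI(x)} then supplies a Cauchy subsequence $\{x_{n_k}\}$, and completeness gives $x_{n_k}\to x$ for some $x\in X$.

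It remains to verify that this $x$ is almost fixed. By continuity of $f$, $f(x_{n_k})\to f(x)$ in $(P(X),H)$; together with $x_{n_k}\to x$, Lemma \ref{d(x_n,A_n) to d(x,A)} yields $d(x_{n_k},f(x_{n_k}))\to d(x,f(x))$. But the left side is bounded by $1/n_k$, so $d(x,f(x))=0$ and $x$ is an almost fixed point of $f$.

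The main obstacle I anticipate is producing the convergent subsequence: a continuous real-valued function on an Atsuji space (even on $\mathbb N$) need not attain its infimum, so the continuity of $g(x)=d(x,f(x))$ alone does not suffice. The crucial observation is that the particular form $g(x)=d(x,f(x))$, combined with the choice of $y_n\in f(x_n)$ close to $x_n$, forces $I(x_n)\to 0$ (unless we already have an almost fixed point), which is precisely the trigger for Theorem \ref{AtsujiI(x)}.
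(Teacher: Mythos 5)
Your proof is correct and follows essentially the same route as the paper's: extract $x_n$ with $d(x_n,f(x_n))<1/n$, obtain a convergent subsequence from the Atsuji property, and conclude via Lemma \ref{d(x_n,A_n) to d(x,A)}. In fact you are more explicit than the paper at the one delicate step --- the paper simply asserts that $\{x_n\}$ has a convergent subsequence, whereas you justify it by showing $I(x_n)\to 0$ and invoking Theorem \ref{AtsujiI(x)} together with completeness of Atsuji spaces, which is exactly the argument that is implicitly needed there.
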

\begin{proof}
	Given hypothesis implies, for each $n\in \mathbb N$, there is $x_n\in X$ such that $d(x_n,f(x_n))<1/n$. If for some $n_0$, $d(x_{n_0}, f(x_{n_0}))=0$, then $f$ has an almost fixed point. Otherwise, for each $n\in \mathbb N$, there is $y_n\in f(x_n)$ such that $0<d(x_n,y_n)<1/n+1/n$. Since $X$ is an Atsuji space, the sequence $\{x_n\}$ has a convergent subsequence $\{x_{n_i}\}_{i=1}^{\infty}$ converging to some $x$ in $X$. Then, by continuity of $f$, $f(x_{n_i})\to f(x)$. Using Lemma \ref{d(x_n,A_n) to d(x,A)}, we have $d(x_{n_i},f(x_{n_i}))\to d(x,f(x))$. This implies $d(x,f(x))=0.$
\end{proof}

In \cite{gb86}, Gerald Beer proved that:  If $X$ is an Atsuji space and $f:X\to X$ is a continuous map such that for some $x\in X$, $\liminf\limits_{n\to \infty}d(f^n(x),f^{n+1}(x))=0$, then $f$ has a fixed point. The following corollary to Theorem \ref{AtsjAlmstFxdPnt} provides a generalization of his result.

\begin{cor}\label{FPfFromAtsjX2CX}
	Let $X$ be an Atsuji space, and $f:X\to C(X)$ be a continuous map such that for each $\epsilon>0$, there is an $x\in X$ satisfying $d(x,f(x))<\epsilon$. Then $f$ has a fixed point. 
\end{cor}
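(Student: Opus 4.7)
The plan is to recognize this as an almost-immediate consequence of Theorem \ref{AtsjAlmstFxdPnt} together with the closedness hypothesis on $f(x)$. Since $C(X) \subset P(X)$, the hypotheses of Theorem \ref{AtsjAlmstFxdPnt} are satisfied verbatim by $f$ viewed as a map into $P(X)$. Applying that theorem, I obtain an almost fixed point: a point $x \in X$ with $d(x, f(x)) = 0$.

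The only extra ingredient is the upgrade from ``almost fixed point'' to ``fixed point.'' Here, a fixed point of the multivalued map $f : X \to C(X)$ means a point $x$ with $x \in f(x)$. This is precisely where the additional assumption $f(x) \in C(X)$ (rather than merely $P(X)$) gets used: since $f(x)$ is closed in $X$, the condition $d(x, f(x)) = \inf_{y \in f(x)} d(x,y) = 0$ forces $x \in \overline{f(x)} = f(x)$.

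So the whole proof consists of two sentences: invoke Theorem \ref{AtsjAlmstFxdPnt} to obtain $x$ with $d(x, f(x)) = 0$, then use the closedness of $f(x)$ in $X$ to conclude $x \in f(x)$. There is no real obstacle; the work was already done in Theorem \ref{AtsjAlmstFxdPnt}, and this corollary is just the observation that when the target values are closed sets, almost fixed points are genuine fixed points. This indeed generalizes Beer's single-valued result, since for $f : X \to X$ the singletons $\{f(x)\}$ are automatically closed.
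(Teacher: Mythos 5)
Your proposal is correct and is exactly the argument the paper intends: the corollary is stated without proof immediately after Theorem \ref{AtsjAlmstFxdPnt}, and the implicit reasoning is precisely to invoke that theorem to get $d(x,f(x))=0$ and then use closedness of $f(x)$ to conclude $x\in f(x)$. Your explicit note that the closedness hypothesis is the only new ingredient, and that Beer's single-valued result is recovered via the singletons $\{f(x)\}$, is a faithful and complete account of what the paper leaves unsaid.
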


\begin{thm}
		Let $X$ be an Atsuji space; and for a $C_{fs}(X)\subset C(X)$, $f:X\to \overline C_{fs}(X)$ be a map such that for each $\epsilon>0$, there is an $x\in X$ satisfying $H(\{x\},f(x))<\epsilon$. If $f^{-1}$ exists and is continuous, then $f$ has a fixed point. 
\end{thm}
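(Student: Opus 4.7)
My plan is to choose a sequence $x_n\in X$ with $H(\{x_n\}, f(x_n))<1/n$, set $P_n:=\{x_n\}$ and $Q_n:=f(x_n)$, and argue inside $Y:=\overline{C}_{fs}(X)$, which is Atsuji by Theorem~\ref{X-C_{fs}(X)} and complete as a closed subset of the complete space $C(X)$. Both $P_n\in C_{fs}(X)\subset Y$ and $Q_n\in Y$ are points of $Y$, with $H(P_n,Q_n)\to 0$. I then split into two cases, depending on whether the sequence $(P_n)$ admits a convergent subsequence in $Y$.

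If some subsequence $P_{n_k}\to A$ in $Y$, then a Hausdorff limit of singletons has diameter zero, so $A=\{x\}$ for some $x\in X$ and $x_{n_k}\to x$ in $X$. The triangle inequality yields $Q_{n_k}\to \{x\}$ in $Y$, and applying the continuous map $f^{-1}$ gives $x_{n_k}=f^{-1}(Q_{n_k})\to f^{-1}(\{x\})$. Comparing with $x_{n_k}\to x$ forces $f^{-1}(\{x\})=x$, that is $f(x)=\{x\}$, the desired fixed point.

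Otherwise $(P_n)$ has no convergent subsequence in $Y$. Since $Y$ is Atsuji, $Y':=[\overline{C}_{fs}(X)]'$ is compact, so $\mathcal N_\epsilon(Y')$ is totally bounded in the complete space $Y$ and therefore $\overline{\mathcal N_\epsilon(Y')}$ is compact for every $\epsilon>0$. Hence $P_n\notin \mathcal N_\epsilon(Y')$ for all sufficiently large $n$, for otherwise a subsequence would lie in $\overline{\mathcal N_\epsilon(Y')}$ and supply a convergent subsubsequence. The bound $H(P_n,Q_n)<1/n$ combined with the triangle inequality then forces $Q_n\notin \mathcal N_{\epsilon/2}(Y')$ eventually as well. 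The Atsuji property of $Y$ supplies a constant $\delta>0$ making $[\mathcal N_{\epsilon/2}(Y')]^c$ uniformly $\delta$-discrete; once $n$ is large enough that $1/n<\delta$, the bound $H(P_n,Q_n)<\delta$ collapses $P_n$ and $Q_n$ to the same point of $Y$, yielding $f(x_n)=\{x_n\}$, a fixed point.

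The main obstacle is the second branch: one has to notice that, in the absence of any convergent subsequence of singletons, the uniform discreteness outside a neighborhood of $Y'$ automatically pinches the pair $(P_n,Q_n)$ together and delivers the fixed point outright. The continuity of $f^{-1}$ is needed only in the first branch, to transfer the Hausdorff convergence $P_{n_k}\to \{x\}$ back to the pointwise identity $f(x)=\{x\}$.
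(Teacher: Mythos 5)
Your Case~1 is correct and is essentially the paper's own argument: a Hausdorff limit of singletons is a singleton, and continuity of $f^{-1}$ plus Lemma~\ref{d(x_n,A_n) to d(x,A)} (or your direct limit comparison) identifies the fixed point. The problem is in Case~2, specifically the assertion that $\mathcal N_\epsilon(Y')$ is totally bounded because $Y'=[\overline C_{fs}(X)]'$ is compact. The $\epsilon$-neighborhood of a compact set in a metric space need not be totally bounded, and Atsuji spaces are precisely a setting where this fails: in the paper's own Example~\ref{C(X)'-noncmpct}, $P=\{e_m/n:m,n\in\mathbb N\}\cup\{0\}\subset l_2$ is Atsuji with $P'=\{0\}$, yet for any fixed $n>1/\epsilon$ the set $N_\epsilon(P')$ contains the infinite $(\sqrt2/n)$-separated family $\{e_m/n\}_{m\in\mathbb N}$. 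Taking $C_{fs}(X)$ to be the collection of singletons, $Y=\overline C_{fs}(P)$ is isometric to $P$ and exhibits the same failure. So $\overline{\mathcal N_\epsilon(Y')}$ need not be compact, and your deduction that $P_n\notin\mathcal N_\epsilon(Y')$ eventually \emph{for every} $\epsilon$ does not follow (and is in fact false for large $\epsilon$ in this example).

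The conclusion of Case~2 can be rescued, but by a different mechanism. Either (a) argue that if for every $\epsilon>0$ infinitely many $P_n$ lie in $\mathcal N_\epsilon(Y')$, a diagonal choice gives $P_{n_k}$ within $1/k$ of points of the compact set $Y'$, hence a convergent subsequence, contradicting the Case~2 hypothesis; this produces a single $\epsilon_0>0$ for which your uniform-discreteness pinching then works verbatim. Or (b), the paper's route, which dissolves the case split altogether: whenever $\{x_n\}\neq f(x_n)$, these are two distinct points of the complete Atsuji space $Y$ at distance less than $1/n$, so the isolation functional satisfies $I(f(x_n))<1/n\to 0$ and Theorem~\ref{AtsujiI(x)} hands you a convergent subsequence of $\{f(x_n)\}$ outright; your Case~2 can therefore only occur when $\{x_n\}=f(x_n)$ for all large $n$, which is already a fixed point. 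I would replace the total-boundedness sentence with one of these two arguments.
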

\begin{proof}
	Since $X$ is an Atsuji space, so by Theorem \ref{X-C_{fs}(X)}, $\overline C_{fs}(X)$ is an Atsuji space. By the given hypothesis we have, for each $n\in \mathbb N$, there is an $x_n\in X$ such that $H(\{x_n\},f(x_n))<1/n$. If for some $n_0\in \mathbb N$, $H(\{x_{n_0}\},f(x_{n_0}))=0$, then $x_{n_0}$ is a fixed point for $f$. Otherwise, due to Atsujiness of $\overline C_{fs}(X)$, the sequence $\{f(x_n)\}$ has some convergent subsequence $ \{f(x_{n_i})\}_{i=1}^\infty$ converging to some $A$ in $\overline C_{fs}(X)$. Then, by continuity of $f^{-1}$, the sequence $\{x_{n_i}\}_{i=1}^\infty \subset X$ is convergent to $x:=f^{-1}(A)$. Using Lemma \ref{d(x_n,A_n) to d(x,A)}, we have $d(x,A)=0$, which implies $x\in A=f(x)$.
\end{proof}

For a given subset $X$ in a normed space, we denote the hyperspace of all nonempty closed convex subsets of $X$ by $C_c(X)$.

\begin{thm}\label{FPconvxPrsrvngMap}
	Let $X$ be a compact subspace of a normed space. If $f:X\to X$ is a continuous map which maps a convex set to a convex set and for each $\epsilon>0$ there is $A\in C_c(X)$ satisfying $H(A,F(A))<\epsilon$ for $F$ on $C_c(X)$, then $f$ has a fixed point.
\end{thm}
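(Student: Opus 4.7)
The plan is to bootstrap twice: first apply Corollary \ref{FPfFromAtsjX2CX} to the induced map $F$ over the hyperspace $C_c(X)$ to produce an $f$-invariant compact convex set $A^*\subset X$, and then invoke Tychonoff's Fixed Point Theorem on $f|_{A^*}$ to produce the desired fixed point of $f$.

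First I would verify that $F:C_c(X)\to C_c(X)$ is a continuous self-map. Since $X$ is compact, $f$ is uniformly continuous, and the argument in the proof of Theorem \ref{fHomFhom} shows that $F$ is then uniformly continuous on $(C(X),H)$. For $A\in C_c(X)$, the image $F(A)=f(A)$ is closed (continuous image of a compact set) and convex (by hypothesis on $f$), so $F$ indeed sends $C_c(X)$ into itself.

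Next I would observe that $C_c(X)$ is itself Atsuji. Compactness of $X$ makes $(C(X),H)$ compact, and $C_c(X)$ is closed in $C(X)$ because Hausdorff limits of closed convex sets are convex; hence $C_c(X)$ is a compact metric space, in particular Atsuji. Interpreting $F$ as the singleton-valued map $\Phi:C_c(X)\to C(C_c(X))$, $\Phi(A)=\{F(A)\}$, the hypothesis furnishes, for every $\epsilon>0$, a set $A_\epsilon\in C_c(X)$ with distance from $A_\epsilon$ to $\Phi(A_\epsilon)$ (measured in the metric of $(C_c(X),H)$) equal to $H(A_\epsilon,F(A_\epsilon))<\epsilon$. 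So Corollary \ref{FPfFromAtsjX2CX} applies to $\Phi$ on the Atsuji space $C_c(X)$ and yields some $A^*\in C_c(X)$ with $A^*\in \Phi(A^*)$, that is, $A^*=F(A^*)=f(A^*)$.

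Finally, $A^*$ is a nonempty closed convex subset of the compact set $X$, hence compact and convex in the ambient normed space, which is a locally convex topological vector space. Since $f(A^*)=A^*$, the restriction $f|_{A^*}$ is a continuous self-map of $A^*$, and Tychonoff's Fixed Point Theorem produces a fixed point of $f|_{A^*}$, which is the desired fixed point of $f$. The only mildly delicate point is the closedness of $C_c(X)$ in $C(X)$, but this is standard: if $A_n\to A$ in $H$ with each $A_n$ convex, a routine argument using the definition of Hausdorff convergence shows $A$ is convex as well. Thus I do not anticipate any serious obstacle; the argument is a clean two-level bootstrap — Atsuji almost-fixed-point on the hyperspace to manufacture an invariant compact convex set, followed by Tychonoff inside it.
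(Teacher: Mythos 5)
Your proposal is correct and follows essentially the same route as the paper: establish continuity of the induced map $F$ via the proof of Theorem \ref{fHomFhom}, show $C_c(X)$ is closed in the compact space $C(X)$ (hence compact, hence Atsuji), apply Corollary \ref{FPfFromAtsjX2CX} to obtain an $f$-invariant set $A^*\in C_c(X)$, and finish with Tychonoff's theorem on $f|_{A^*}$. Your explicit recasting of $F$ as the singleton-valued map $\Phi$ is a slightly more careful reading of the corollary than the paper's, but the argument is the same.
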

\begin{proof}
	Since $f$ maps convex sets to convex sets, by the proof of Theorem \ref{fHomFhom}, the induced map $F:C_c(X)\to C_c(X)$ is continuous. It is known that, if a sequence $\{A_n\}$ of nonempty closed convex subsets of a normed space is Wijsman convergent to a nonempty closed subset $A$, then $A$ is convex (\cite{gb93}, p. 43). Thus $C_c(X)$ is closed in $C(X)$, and so compact. Then by Corollary \ref{FPfFromAtsjX2CX}, $F$ has a fixed point, say $P$. This implies, the restricted map $f|_P$ is continuous from $P$ to $P$. And hence, by Tychonoff's Theorem $f$ has a fixed point. 
\end{proof}

We note that, the domain of $f$ taken in Theorem \ref{FPconvxPrsrvngMap} is more general than the domain taken in Schauder's fixed point theorem \cite{sa75}: If $A$ is a compact convex subset of a Banach space and $f$ is a continuous map from $A$ into $A$, then $f$ has a fixed point.

\end{document}